\begin{document}
\numberwithin{equation}{section}
\newtheorem{theorem}{Theorem}
\newtheorem{algo}{Algorithm}
\newtheorem{lem}{Lemma} 
\newtheorem{de}{Definition} 
\newtheorem{ex}{Example}
\newtheorem{pr}{Proposition} 
\newtheorem{claim}{Claim} 
\newtheorem{re}{Remark}
\newtheorem{co}{Corollary}
\newtheorem{conv}{Convention}
\newcommand{\di}{\hspace{1.5pt} \big|\hspace{1.5pt}}
\newcommand{\idi}{\hspace{.5pt}|\hspace{.5pt}}
\newcommand{\hs}{\hspace{1.3pt}}
\newcommand{\thmf}{Theorem~1.15$'$}
\newcommand{\ndi}{\centernot{\big|}}
\newcommand{\nidi}{\hspace{.5pt}\centernot{|}\hspace{.5pt}}
\newcommand{\lp}{\mbox{$\hspace{0.12em}\shortmid\hspace{-0.62em}\alpha$}}
\newcommand{\PQ}{\bb{P}^1(\bb{Q})}
\newcommand{\pmn}{\cl{P}_{M,N}}
\newcommand{\lcm}{\operatorname{lcm}}
\newcommand{\he}{holomorphic eta quotient\hspace*{2.5pt}}
\newcommand{\hes}{holomorphic eta quotients\hspace*{2.5pt}}
\newcommand{\defG}{Let $G\subset\GG$ be a subgroup that is conjugate to a finite index subgroup of $\G$. } 
\newcommand{\defg}{Let $G\subset\GG$ be a subgroup that is conjugate to a finite index subgroup of $\G$\hs\hs} 
\renewcommand{\phi}{\varphi}
\newcommand{\Z}{\bb{Z}}
\newcommand{\ZD}{\Z^{\D}}
\newcommand{\N}{\bb{N}}
\newcommand{\Q}{\bb{Q}}
\newcommand{\pii}{{{\pi}}}
\newcommand{\R}{\bb{R}}
\newcommand{\C}{\bb{C}}
\newcommand{\I}{\hs\cl{I}_{n,N}}
\newcommand{\St}{\operatorname{Stab}}
\newcommand{\D}{\cl{D}_N}
\newcommand{\rh}{{{\boldsymbol\rho}}}
\newcommand{\bh}{{\cl{M}}} 
\newcommand{\lv}{\hyperlink{level}{{\text{level}}}\hspace*{2.5pt}}
\newcommand{\fct}{\hyperlink{factor}{{\text{factor}}}\hspace*{2.5pt}}
\newcommand{\q}{\hyperlink{q}{{\mathbin{q}}}}
\newcommand{\rd}{\hyperlink{redu}{{{\text{reducible}}}}\hspace*{2.5pt}}
\newcommand{\ird}{\hyperlink{irredu}{{{\text{irreducible}}}}\hspace*{2.5pt}}
\newcommand{\str}{\hyperlink{strong}{{{\text{strongly reducible}}}}\hspace*{2.5pt}}
\newcommand{\rdn}{\hyperlink{redon}{{{\text{reducible on}}}}\hspace*{2.5pt}}
\newcommand{\atl}{\hyperlink{atinv}{{\text{Atkin-Lehner involution}}}\hspace*{3.5pt}}
\newcommand{\atls}{\hyperlink{atinv}{{\text{Atkin-Lehner involutions}}}\hspace*{3.5pt}}
\newcommand{\T}{\mathrm{T}}
\renewcommand{\H}{\fr{H}}
\newcommand{\W}{\text{\calligra W}_n}
\newcommand{\GG}{\cl{G}}
\newcommand{\g}{\fr{g}}
\newcommand{\Gm}{\Gamma}
\newcommand{\Gmtl}{\widetilde{\Gamma}_\ell}
\newcommand{\gm}{\gamma}
\newcommand{\go}{\gamma_1}
\newcommand{\gmt}{\widetilde{\gamma}}
\newcommand{\gmdt}{\widetilde{\gamma}'}
\newcommand{\gmot}{\widetilde{\gamma}_1}
\newcommand{\gmdot}{{\widetilde{\gamma}}'_1}
\newcommand{\s}{\Large\text{{\calligra r}}\hspace{1.5pt}}
\newcommand{\ms}{m_{{{S}}}}
\newcommand{\nisim}{\centernot{\sim}}
\newcommand{\level}{\hyperlink{level}{{\text{level}}}}
\newcommand{\Redcon}{the \hyperlink{red}{\text{Reducibility~Conjecture}}}
\newcommand{\ReDcon}{The \hyperlink{red}{\text{Reducibility~Conjecture}}}
\newcommand{\Conred}{Conjecture~$1'$}
\newcommand{\effth}{Theorem~$\ref{27.11.2015}'$}
\newcommand{\Conredd}{Conjecture~$1''$}
\newcommand{\Conreddd}{Conjecture~$1'''$}
\newcommand{\Conired}{Conjecture~$2'$}
\newtheorem*{pro}{\textnormal{\textit{Proof of Lemma~\ref{27.11.2015.1}}}}
\newtheorem*{cau}{Caution}
\newtheorem{thrmm}{Theorem}[section]
\newtheorem*{thmA}{Theorem~A}
\newtheorem*{corA}{Corollary~A}
\newtheorem*{corB}{Corollary~B}
\newtheorem*{corC}{Corollary~C}
\newtheorem{no}{Notation}
\renewcommand{\thefootnote}{\fnsymbol{footnote}}
\newtheorem{oq}{Open question}
\newtheorem{conj}{Conjecture}
\newtheorem{hy}{Hypothesis} 
\newtheorem{expl}{Example}
\newcommand\ileg[2]{\bigl(\frac{#1}{#2}\bigr)}
\newcommand\leg[2]{\Bigl(\frac{#1}{#2}\Bigr)}
\newcommand{\e}{\eta}
\newcommand{\sgn}{\operatorname{sgn}}
\newcommand{\bb}{\mathbb}
\newtheorem*{conred}{Conjecture~\ref{con1}$\mathbf{'}$}
\newtheorem*{conredd}{Conjecture~\ref{con1}$\mathbf{''}$}
\newtheorem*{conreddd}{Conjecture~\ref{con1}$\mathbf{'''}$}
\newtheorem*{conired}{Conjecture~\ref{19.1Aug}$\mathbf{'}$}
\newtheorem*{efth}{Theorem~\ref{27.11.2015}$\mathbf{'}$}
\newtheorem*{eflem}{Theorem~\ref{15May15}.$(b)\mathbf{'}$}
\newtheorem*{procl}{\textnormal{\textit{Proof}}}
\newtheorem*{thmff}{Theorem~\ref{7.4Jul}$\mathbf{'}$}
\newtheorem*{coo}{Corollary~\ref{17Aug}$\mathbf{'}$}
\newcommand{\cooo}{Corollary~\ref{17Aug}$'$}
\newtheorem*{cotw}{Corollary~\ref{17.1Aug}$\mathbf{'}$}
\newcommand{\cotww}{Corollary~\ref{17.1Aug}$'$}
\newtheorem*{cothr}{Corollary~\ref{17.2Aug}$\mathbf{'}$}
\newcommand{\cothre}{Corollary~\ref{17.2Aug}$'$}
\newtheorem*{cne}{Corollary~\ref{15.5Aug}$\mathbf{'}$}
\newcommand{\cnew}{Corollary~\ref{15.5Aug}$'$\hspace{3.5pt}}
\newcommand{\fr}{\mathfrak}
\newcommand{\cl}{\mathcal}
\newcommand{\rad}{\mathrm{rad}}
\newcommand{\ord}{\operatorname{ord}}
\newcommand{\m}{\setminus}
\newcommand{\G}{\Gamma_1}
\newcommand{\GN}{\Gamma_0(N)}
\newcommand{\X}{\widetilde{X}}
\renewcommand{\P}{{\textup{p}}} 
\newcommand{\al}{{\hs\operatorname{al}}}
\newcommand{\p}{p_\text{\tiny (\textit{N})}}
\newcommand{\pN}{p_\text{\tiny\textit{N}}}
\newcommand{\bt}{\mbox{$\raisebox{-0.59ex}
  {${{l}}$}\hspace{-0.215em}\beta\hspace{-0.88em}\raisebox{-0.98ex}{\scalebox{2}
  {$\color{white}.$}}\hspace{-0.416em}\raisebox{+0.88ex}
  {$\color{white}.$}\hspace{0.46em}$}{}}
  \newcommand{\un}{\hs\underline{\hspace{5pt}}\hs}
\newcommand{\U}{u_\textit{\tiny N}}
\newcommand{\Upr}{u_{\textit{\tiny N}^\prime}}
\newcommand{\Up}{u_{\textit{\tiny p}^\textit{\tiny e}}}
\newcommand{\Un}{u_{\textit{\tiny p}_\textit{\tiny 1}^{\textit{\tiny e}_\textit{\tiny 1}}}}
\newcommand{\Um}{u_{\textit{\tiny p}_\textit{\tiny m}^{\textit{\tiny e}_\textit{\tiny m}}}}
\newcommand{\Ut}{u_{\text{\tiny 2}^\textit{\tiny a}}}
\newcommand{\At}{A_{\text{\tiny 2}^\textit{\tiny a}}}
\newcommand{\Uh}{u_{\text{\tiny 3}^\textit{\tiny b}}}
\newcommand{\Ah}{A_{\text{\tiny 3}^\textit{\tiny b}}}
\newcommand{\Uprl}{u_{\textit{\tiny N}_1}}
\newcommand{\Uprlm}{u_{\textit{\tiny N}_i}}
\newcommand{\UM}{u_\textit{\tiny M}}
\newcommand{\UMp}{u_{\textit{\tiny M}_1}}
\newcommand{\w}{\omega_\textit{\tiny N}}
\newcommand{\wm}{\omega_\textit{\tiny M}}
\newcommand{\wa}{\omega_{\text{\tiny N}_\textit{\tiny a}}}
\newcommand{\wma}{\omega_{\text{\tiny M}_\textit{\tiny a}}}
\renewcommand{\P}{{\textup{p}}}

\title[ 
{\tiny Infinite families of simple holomorphic eta quotients}] 
{Infinite families of simple \\ holomorphic eta quotients} 

\author{Soumya Bhattacharya}
\address 
{Ramakrishna Mission Vivekananda University\\
Belur Math\\
Howrah 711202} 
\email{soumya.bhattacharya@gmail.com}

%

\subjclass[2010]{Primary 11F20, 11F37, 11F11; Secondary 11Y05, 11Y16, 11G16, 11F12}

\maketitle

 \begin{abstract}
 We address the problem of constructing a simple holomorphic eta quotient of a given level $N$.
Such constructions are known for all cubefree $N$.
Here, we provide such constructions for
arbitrarily large prime power levels. 
As a consequence, we 
obtain an irreducibility
criterion for holomorphic eta quotients in general. 
 \end{abstract}

 \section{Introduction}
\label{intro}
The Dedekind eta function is defined by the infinite product:\hypertarget{queue}{}
 \begin{equation} \eta(z):=q^{\frac{1}{24}}\prod_{n=1}^\infty(1-q^n)\hspace{5pt}\text{for all $z\in\H$,}
\label{17.4Aug}\end{equation} 
where $q^r=q^r(z):=e^{2\pi irz}$ for all $r$ 
and 
$\H:=\{\tau\in\C\hs\idi\operatorname{Im}(\tau)>0\}$. 
Eta is a holomorphic function on $\H$ with no zeros.
This function has its significance in Number Theory. For example, 
$1/\e$ is the generating function for the ordinary partition function $p:\N\rightarrow\N$
(see \cite{ak})
and 
 the constant term in the Laurent
 expansion at 
 $1$ of 
the Epstein zeta function $\zeta_Q$ 
attached to a positive definite quadratic form $Q$ is related
 via the Kronecker limit formula to
the value of $\e$
at the root of the associated quadratic polynomial in $\H$ (see \cite{coh}).
The value of $\e$ at such a quadratic irrationality of discriminant $-D$
is also related via the Lerch/Chowla-Selberg formula to  the values of the Gamma function 
with arguments in $D^{-1}\N$ 
 \hspace{.5pt}(see 
  \cite{pw}). 
   Further, eta quotients 
  appear in 
  denominator formula for Kac-Moody algebras,
   (see 
   \cite{k}), 
 in 
 ''Moonshine`` of finite groups (see \cite{h}),
in Probability Theory, e.~g. in the distribution of the distance travelled in a uniform four-step
random walk (see \cite{bswz})
and in the distribution of crossing probability in two-dimensional percolation (see \cite{kz}).
In fact, the eta function
comes up naturally in many other areas of Mathematics (see the Introduction in \cite{B-three} for a brief
overview of them). 

The function $\e$ is a modular form
of weight $1/2$ with a multiplier system 
on $\operatorname{SL}_2(\Z)$ (see \cite{b}).
An 
eta quotient $f$ is a finite product of the form 
\begin{equation}
 \prod\e_d^{X_d},
\label{13.04.2015}\end{equation}
where $d\in\N$, $\eta_d$ is the rescaling of $\eta$ by $d$, defined by
\begin{equation}
 \e_d(z):=\e(dz) \ \text{ for all $z\in\H$}
\end{equation}
and $X_d\in\Z$.
Eta quotients naturally inherit modularity 
from $\e$: The eta quotient $f$ in (\ref{13.04.2015}) transforms like a modular form of
weight $\frac12\sum_dX_d$ with a multiplier system on suitable congruence subgroups of $\operatorname{SL}_2(\Z)$: 
The largest among
these subgroups is 
\begin{equation}
 \Gm_0(N):=\Big{\{}\begin{pmatrix}a&b\\ c&d\end{pmatrix}\in
\operatorname{SL}_2(\Z)\hspace{3pt}\Big{|}\hspace{3pt} c\equiv0\hspace*{-.3cm}\pmod N\Big{\}},
\end{equation}
where 
\begin{equation}
 N:=\operatorname{lcm}\{d\in\N\hs\idi\hs X_d\neq0\}.
\end{equation}
We call $N$ the \emph{level} of $f$.
Since $\eta$ is non-zero on $\H$, 
the eta quotient $f$ 
is holomorphic if and only if $f$ does not have any pole at the cusps of 
$\Gamma_0(N)$.

An \emph{eta quotient 
on $\Gm_0(M)$} is
an eta quotient whose level divides $M$.
Let $f$, $g$ and $h$ be nonconstant \hes on $\Gm_0(M)$
such that 
$f=g\times h$. Then we say that $f$ is \emph{factorizable on} $\Gm_0(M)$. 
We call a holomorphic eta quotient $f$ of level $N$ \emph{quasi-irreducible} (resp. \emph{irreducible}),
if it is not factorizable on $\Gm_0(N)$ (resp. on $\Gm_0(M)$ for all multiples $M$ of $N$).
Here, it is worth mentioning that the notions of irreducibility and quasi-irreducibility of holomorphic 
eta quotients are conjecturally equivalent (see \cite{B-three}).
We say that a holomorphic eta quotient is \emph{simple} if it is 
 both primitive and quasi-irreducible.
 
\section{The main result and conjecture}

\ \quad For a prime $p$ and an integer $n>3$, we define the eta quotient $f_{p,n}$ by 
 \begin{equation}
f_{p,n}:=\left\{\begin{array}{cl}
{\frac{\displaystyle{\eta_p^p\hspace{1pt}\eta_{p^{n-1}}^{(p-1)^2}\hspace{1pt}\prod_{s=1}^{n/2-1}\eta_{p^{2s-1}}^{p^2-3p+1}\hspace{1pt}\eta_{p^{2s}}^{p^2-2p+2}}}{\displaystyle{(\eta\hspace{1pt}\eta_{p^n})^{p-1}}}}&\text{ if $n$ is even.}\\     
\\
{\frac{\displaystyle{(\eta_p\hspace{1pt}\eta_{p^{n-1}})^p\hspace{1pt}\prod_{s=1}^{n-1}\eta_{p^s}^{p^2 - 3p + 2}}}{\ \quad\displaystyle{(\eta\hspace{1pt}\eta_{p^n})^{p-1}}}}&\text{ if $n$ is odd and $p\neq2$,}\\ 
                 \end{array}\right.\label{fpn}\end{equation}
Clearly, 
$f_{p,n}$ is invariant under the Fricke involution $W_{p^n}$.
We shall show that: 
 \begin{theorem}
 For any integer $n>3$, 
 $f_{p,n}$ is a simple holomorphic eta quotient of level $p^n$.
\label{prp1}\end{theorem}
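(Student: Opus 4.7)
The plan is to verify, in sequence, that $f_{p,n}$ has level exactly $p^n$, is primitive, is holomorphic, and is quasi-irreducible on $\Gamma_0(p^n)$. The first two properties are immediate: in both parity cases the denominator $(\eta\,\eta_{p^n})^{p-1}$ contributes $X_1 = X_{p^n} = -(p-1) \neq 0$, which shows at once that $\gcd\{d : X_d \neq 0\} = 1$ (so $f_{p,n}$ is primitive) and that the level, an a priori divisor of $p^n$, equals $p^n$.

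For holomorphicity I would apply Ligozat's formula at each cusp of $\Gamma_0(p^n)$. Since the level is a prime power, the cusps are indexed by $p^k$ for $k = 0, 1, \ldots, n$, and the order of $f_{p,n}$ at the cusp of denominator $p^k$ is a positive scalar multiple of
$$L_k(X) := \sum_{j=0}^{n} X_{p^j}\, p^{2\min(j,k)-j}.$$
By the Fricke invariance of $f_{p,n}$ observed in the paper, $L_k(X)$ and $L_{n-k}(X)$ are proportional by a power of $p$, so it suffices to check $L_k(X) \geq 0$ for $k \leq n/2$. After plugging in the closed-form values of the $X_{p^j}$ and splitting the sum at $j = k$, the odd-$n$ case reduces to a geometric sum with constant inner exponent $(p-1)(p-2)$, while the even-$n$ case requires pairing the alternating contributions $p^2-3p+1$ and $p^2-2p+2$ on odd- and even-index positions; after this pairing one should be able to factor out $(p-1)^2$ and display non-negativity directly. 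A small number of boundary values of $k$ will likely require separate handling.

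The main obstacle is quasi-irreducibility. Suppose $f_{p,n} = g\cdot h$ with $g, h$ non-constant holomorphic eta quotients on $\Gamma_0(p^n)$, and write $g = \prod_j \eta_{p^j}^{a_j}$ and $h = \prod_j \eta_{p^j}^{b_j}$, so that $a_j + b_j = X_{p^j}$. Both integer vectors $(a_j)$ and $(b_j)$ must lie in the polyhedral cone cut out by $L_k(\cdot) \geq 0$ for $k = 0, \ldots, n$; the task is to rule out integer solutions other than $(0, \ldots, 0)$ and $(X_{p^j})_j$. The holomorphicity check above in fact gives strict positivity $L_k(X) > 0$ for every $k$, so $f_{p,n}$ lies in the interior of this cone rather than on an extreme ray, and hence the argument must exploit the integrality of the $a_j$ and cannot proceed by convexity alone.

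I would attack this by combining two ingredients: the Fricke symmetry of $f_{p,n}$ (which, given a factorization $f_{p,n} = g h$, produces a second factorization $f_{p,n} = (W_{p^n} g)(W_{p^n} h)$ and thereby constrains the $a_j$ through the pairing $j \leftrightarrow n-j$), and a careful $p$-adic analysis of the Ligozat inequalities at successive cusps. Working from the extreme cusps $k = 0, n$ inward to the middle, one uses the precise arithmetic values of $L_k(X)$ together with integrality to pin down the pairs $(a_j, a_{n-j})$ one at a time. The bottleneck I anticipate is that each Ligozat inequality couples all $n+1$ variables $a_j$, so decoupling them inductively will be delicate; I expect the factorization machinery developed in the author's earlier paper~\cite{B-three}, which treats reducibility of holomorphic eta quotients in detail, to be essential for executing this analysis and for handling the two parity cases in a unified way.
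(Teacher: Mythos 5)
Your verification of the level and primitivity of $f_{p,n}$ is correct, and your outline for holomorphicity (Ligozat's criterion at the cusps $1/p^k$, halved by Fricke invariance) is plausible, though you leave the actual computation undone. The genuine gap is in quasi-irreducibility, which you yourself identify as the bottleneck: you propose to ``pin down the pairs $(a_j,a_{n-j})$ one at a time'' by working inward from the extreme cusps, but you supply no mechanism for doing so and defer the decisive step to unspecified machinery from \cite{B-three}. As written, this part is a research plan rather than a proof, and neither of your two proposed ingredients closes it: Fricke symmetry of $f_{p,n}$ only produces a second factorization $(W_{p^n}g)(W_{p^n}h)$ without relating $W_{p^n}g$ to $g$ or $h$, and the coupling of all $n+1$ exponents in each Ligozat inequality --- which you acknowledge --- is exactly the difficulty that needs a new idea to disentangle.

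The idea you are missing is quantitative, and your own framing narrowly skirts it. You observe that all cusp orders of $f_{p,n}$ are strictly positive, so that convexity alone cannot work; the decisive extra fact is that $\ord_\infty(f_{p,n})=\ord_0(f_{p,n})=1/24$, the \emph{minimum} possible nonzero order of a holomorphic eta quotient at a cusp (orders lie in $\tfrac1{24}\Z$). Hence in any factorization $f_{p,n}=gh$ into nonconstant holomorphic eta quotients on $\Gamma_0(p^n)$, one factor has order $0$ at $\infty$ and one has order $0$ at $0$, and after ruling out the degenerate case one may assume $g$ has order $1/24$ at $\infty$ and $0$ at $0$. The integrality input is then not the system of Ligozat inequalities but the lattice of order vectors of eta quotients on $\Gamma_0(p^n)$: via the Smith normal form $\operatorname{diag}(1,\dots,1,p^{n-1},p^{n-1}(p^2-1))$ of the matrix $B_{p^n}$, the order vector of $g$ must satisfy the congruence (\ref{k10}), whose first and last coordinates force $\ell_1/p^{n-1}\equiv 1/(p^{n-1}(p^2-1))\pmod{\Z}$ --- which has no integer solution. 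Without this (or an equivalent) lattice-theoretic argument, the quasi-irreducibility claim remains unproved.
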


From Theorem~2 in \cite{B-three}, we recall that any simple
holomorphic eta quotient of a prime power level is irreducible.
So, in particular, the above theorem implies:
\begin{co}
  For any integer $n>3$, the eta quotient $f_{p,n}$ is 
irreducible.
\end{co}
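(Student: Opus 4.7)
I need to verify four things about $f_{p,n}$: the level is exactly $p^n$, it is holomorphic, it is primitive, and it is quasi-irreducible on $\Gamma_0(p^n)$. The level and primitivity are immediate from (\ref{fpn}): the exponent of $\eta_{p^n}$ is $-(p-1)\neq0$, forcing the lcm of the indices with non-zero exponent (the level) to equal $p^n$; and the exponent of $\eta_1$ is also $-(p-1)\neq0$, so the gcd of those indices is $1$, i.e., $f_{p,n}$ is not a rescaling of any eta quotient and is therefore primitive. What remains is holomorphicity and quasi-irreducibility.

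Holomorphicity will follow from Ligozat's formula for the order at each cusp of $\Gamma_0(p^n)$. The cusps are represented by $1/p^i$ for $i=0,\dots,n$, and the Fricke invariance of $f_{p,n}$ gives $\ord_{1/p^i}(f_{p,n})=\ord_{1/p^{n-i}}(f_{p,n})$, so it suffices to handle $i\le n/2$. Plugging the exponents of (\ref{fpn}) into Ligozat's formula and splitting each sum at $j=i$ turns the expression into two geometric-type partial sums in $p$; after telescoping, $24\cdot\ord_{1/p^i}(f_{p,n})$ comes out as a closed form polynomial in $p$ that can be checked to be non-negative for every relevant $i$. The cases of even and odd $n$ use the same recipe with minor bookkeeping differences.

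For quasi-irreducibility, suppose toward a contradiction that $f_{p,n}=g\cdot h$ with $g=\prod_{j=0}^n\eta_{p^j}^{y_j}$ and $h=\prod_{j=0}^n\eta_{p^j}^{X_{p^j}-y_j}$ both nonconstant holomorphic eta quotients on $\Gamma_0(p^n)$. Their holomorphicity yields the system
\[0\le\ord_{1/p^i}(g)\le\ord_{1/p^i}(f_{p,n}),\qquad i=0,1,\dots,n,\]
of $n+1$ pairs of integral linear inequalities in the integer unknowns $y_0,\dots,y_n$. Small-case computation shows $\ord_\infty(f_{p,n})$ and $\ord_0(f_{p,n})$ are very small (indeed $\tfrac{1}{24}$ when $n$ is even), forcing $\sum_jp^jy_j$ and $\sum_jp^{n-j}y_j$ into a tiny window and pinning down each $y_j$ modulo a specific power of $p$; the inequalities at the interior cusps, combined with Fricke duality $y\mapsto(y_n,\dots,y_0)$ (which preserves the feasible region), should drive $y$ to either $0$ or $X$. \textbf{The main obstacle is this last combinatorial step}: with coefficients that grow geometrically in $p$ and $\Theta(n)$ inequalities in $\Theta(n)$ unknowns, any brute-force bound is unwieldy. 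The cleanest strategy I see is induction on $n$, using the tight constraints at the extreme cusps to extract from a putative nontrivial factor of $f_{p,n}$ a putative nontrivial factor of $f_{p,n-2}$, with the base cases $n\in\{4,5\}$ checked by hand.
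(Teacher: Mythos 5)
There is a genuine gap, and it comes in two layers. First, the statement you are asked to prove is \emph{irreducibility}, which in this paper means that $f_{p,n}$ admits no factorization into nonconstant holomorphic eta quotients on $\Gamma_0(M)$ for \emph{every} multiple $M$ of $p^n$, not just on $\Gamma_0(p^n)$ itself. Your plan only ever discusses factorizations with factors of the form $\prod_j \eta_{p^j}^{y_j}$, i.e.\ factorizations on $\Gamma_0(p^n)$; that is quasi-irreducibility. The paper deduces the corollary in one line from Theorem~\ref{prp1} together with Theorem~2 of \cite{B-three} (any simple holomorphic eta quotient of prime power level is irreducible); without citing or reproving such a descent result, establishing level, holomorphicity, primitivity and quasi-irreducibility does not yield the corollary, since a priori $f_{p,n}$ could factor on $\Gamma_0(p^{n+1})$ or on $\Gamma_0(Mp^n)$ with extra primes, using $\eta_d$ with $d\nmid p^n$.

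Second, even the quasi-irreducibility part of your plan is not a proof: you explicitly flag the final combinatorial step (showing the feasible integer vectors $y$ are only $0$ and $X$) as an obstacle and propose an induction on $n$ that is not carried out, with no mechanism given for extracting a factor of $f_{p,n-2}$ from a factor of $f_{p,n}$. The paper's actual argument for Theorem~\ref{prp1} avoids this brute-force route entirely: it uses the result of \cite{B-four} that any holomorphic factor of the distinguished eta quotient $F_N$ corresponds to a lattice point of $\cl{Z}_N\cap\Z^{\D}$, computes the Smith normal form $D_{p^n}=U B_{p^n} V$ of the integral matrix $B_{p^n}$, and then shows via the congruence (\ref{k10}) at the two extreme cusps (where $\ord = \tfrac1{24}$, by Fricke invariance) that a nontrivial splitting $Y=Y'+Y''$ would force an unsolvable congruence $\ell_1/p^{2m-1}\equiv 1/d_N \pmod{\Z}$ — a contradiction. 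Your observation that the orders at $0$ and $\infty$ are tiny and that Fricke duality preserves the feasible region is in the right spirit, but as written the decisive step is missing, and the passage from quasi-irreducible (plus primitive) to irreducible is absent altogether.
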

Also, from Corollary~1 in \cite{B-three}, we recall that given an irreducibile
holomorphic eta quotient $f$ of a prime power level, all the rescalings of $f$
by positive integers are irreducible. Thus we obtain:
\begin{co}[Irreducibility criterion for holomorphic eta quotients]
\ \newline Given a holomorphic eta quotient $g$, if there exist
 $n,d\in\N$ and a prime $p$, such that
 $g$ is the rescaling of $f_{p,n}$ by $d$, then $g$ is irreducible.
 Here $f_{p,n}$ is as defined in $(\ref{fpn})$.
\end{co}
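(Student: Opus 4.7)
The plan is to chain Theorem~\ref{prp1} with the two results from \cite{B-three} that are recalled verbatim in the excerpt just before this corollary; no new argument is needed beyond verifying that the hypotheses line up.

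First, by hypothesis, $g$ is the rescaling of $f_{p,n}$ by a positive integer $d$, for some prime $p$ and some integer $n$ (necessarily with $n>3$, and additionally $p\neq 2$ when $n$ is odd, since otherwise (\ref{fpn}) does not define $f_{p,n}$). Applying Theorem~\ref{prp1} yields that $f_{p,n}$ is a simple \he of level $p^n$. Since $p^n$ is a prime power, Theorem~2 of \cite{B-three}, quoted just above the preceding corollary, immediately delivers that $f_{p,n}$ is irreducible.

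Next, because $f_{p,n}$ is irreducible and of prime power level $p^n$, Corollary~1 of \cite{B-three}, quoted just above the present statement, asserts that every rescaling of an irreducible \he of prime power level by a positive integer is itself irreducible. Applying this to the rescaling factor $d\in\N$ gives that the rescaling of $f_{p,n}$ by $d$, namely $g$, is irreducible.

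There is essentially no obstacle: the proof is a single transitivity chain, $f_{p,n}$ simple $\Longrightarrow$ $f_{p,n}$ irreducible $\Longrightarrow$ every rescaling of $f_{p,n}$ irreducible $\Longrightarrow$ $g$ irreducible. The only careful bookkeeping point is that the crucial hypothesis of Corollary~1 of \cite{B-three} is that the level of the eta quotient being rescaled is a prime power, which is exactly what Theorem~\ref{prp1} supplies; without this hypothesis the preservation of irreducibility under rescaling is not at all automatic (since rescaling by a $d$ with several prime factors enlarges the ambient level well beyond a prime power), so all of the nontrivial content has been pushed into Theorem~\ref{prp1} and into the two cited results from \cite{B-three}.
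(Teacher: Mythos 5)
Your proof is correct and is exactly the argument the paper intends: Theorem~\ref{prp1} gives that $f_{p,n}$ is simple of prime power level, Theorem~2 of \cite{B-three} upgrades this to irreducibility, and Corollary~1 of \cite{B-three} transfers irreducibility to every rescaling by $d\in\N$. Your added bookkeeping about why the prime-power hypothesis is essential matches the paper's (implicit) reasoning, so there is nothing to correct.
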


With a good amount of numerical evidence, we conjecture that 
\begin{conj}
 For any integer $n>3$ and for any odd prime $p$, there are no simple holomorphic eta quotients of level $p^{n}$ and of weight greater that of $f_{p,n}$.
\end{conj}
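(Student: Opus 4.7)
The plan is to reduce the conjecture to a question about the Hilbert basis of an explicit rational polyhedral cone attached to $\Gm_0(p^n)$ and then to bound a linear functional over that basis. In coordinates $X=(X_0,\ldots,X_n)\in\Z^{n+1}$ encoding the eta quotient $\prod_{i=0}^n \e_{p^i}^{X_i}$, the holomorphy condition at the $n+1$ cusps of $\Gm_0(p^n)$ translates into $n+1$ linear inequalities $AX\ge 0$ whose entries are determined by the gcds $\gcd(p^i,p^j)$. Thus the set $\cl{H}_n$ of holomorphic eta quotients on $\Gm_0(p^n)$ forms a finitely generated commutative sub-semigroup of $\Z^{n+1}$, and a simple holomorphic eta quotient of level exactly $p^n$ is precisely a primitive, indecomposable (Hilbert-basis) element of $\cl{H}_n$ with both $X_0\neq 0$ and $X_n\neq 0$. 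The conjecture becomes the assertion that, among such Hilbert-basis elements, the weight $\tfrac12\sum_i X_i$ is maximized by $f_{p,n}$.

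Next I would exploit the Fricke involution $W_{p^n}$, which acts on $\cl{H}_n$ by coordinate reversal and preserves weight, holomorphy and simplicity; since $f_{p,n}$ is Fricke-invariant, it is natural to try first to show that any weight-maximal simple eta quotient of level $p^n$ may be chosen Fricke-symmetric, cutting the problem dimension roughly in half. In parallel, M\"obius inversion over the divisor chain $1\mid p\mid\cdots\mid p^n$ expresses the $X_i$ as integer linear combinations of the cusp orders $\ell_0,\ldots,\ell_n$ of $f$, so the weight becomes an explicit non-negative linear functional of $(\ell_0,\ldots,\ell_n)$. Maximizing the weight over simple holomorphic eta quotients of level $p^n$ then reduces to a linear program on the image of $\cl{H}_n$ in $\Z_{\ge 0}^{n+1}$, subject to the indecomposability constraint, and one should then verify that the specific order vector of $f_{p,n}$ is optimal for this program.

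The principal obstacle, and the reason this statement is presently only a conjecture, lies in encoding indecomposability (quasi-irreducibility) usefully inside this linear-programming framework. One would need to produce, for every admissible vector $X$ of weight exceeding that of $f_{p,n}$, an explicit nonconstant holomorphic eta quotient on $\Gm_0(p^n)$ whose exponent vector $Y$ satisfies $X-Y\in\cl{H}_n\setminus\{0\}$, thereby witnessing a decomposition contradicting simplicity. Such witness summands have to be produced uniformly in $p$ and $n$, which seems to require a structural description of the Hilbert basis of $\cl{H}_n$ going well beyond the existence result in Theorem~\ref{prp1}; combining this with the prime-power irreducibility criterion of \cite{B-three} and an Atkin--Lehner pullback argument appears the most promising line of attack, but the combinatorial growth of the Hilbert basis with $n$ is what currently prevents such a proof and explains why only numerical evidence is available.
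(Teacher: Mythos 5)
This statement is not a theorem of the paper at all: it is stated as a conjecture, supported only by numerical evidence, and the paper offers no proof of it. So there is no ``paper's own proof'' to compare against, and the only question is whether your text actually proves the statement. It does not. What you have written is a plausible research programme --- recast simple holomorphic eta quotients of level $p^n$ as primitive indecomposable elements of the semigroup $\cl{H}_n=\{X\in\Z^{n+1}: A_{p^n}X\ge 0\}$ with $X_0,X_n\neq 0$, use the Fricke involution to symmetrize, and try to bound the linear functional $\tfrac12\sigma(X)$ over the Hilbert basis --- but the entire mathematical content of the conjecture is concentrated in the step you yourself flag as missing: showing that every holomorphic $X$ of level exactly $p^n$ with weight exceeding that of $f_{p,n}$ admits a nontrivial decomposition $X=Y+(X-Y)$ with both summands nonzero, holomorphic and of level dividing $p^n$. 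Without a uniform construction of such witnesses in $p$ and $n$, nothing has been proved.

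Two further cautions about the framework itself. First, weight maximization over the Hilbert basis is not a linear program in the ordinary sense: the feasible set (indecomposable elements of $\cl{H}_n$) is not cut out by linear inequalities, so LP duality or vertex arguments do not apply directly; this is exactly why the reduction does not buy you anything beyond a restatement. Second, your symmetrization step (``any weight-maximal simple eta quotient may be chosen Fricke-symmetric'') is itself unproved and not obviously true --- the Fricke involution preserves weight and simplicity, so weight-maximal simple eta quotients come in Fricke-orbits, but that does not mean a Fricke-\emph{fixed} one realizes the maximum. In short: your proposal is a reasonable reformulation and an honest identification of the obstruction, but it is not a proof, and the paper does not claim one either.
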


\section{Notations and the basic facts}
\label{26.08.2015.1}
By $\N$ we denote the set of positive integers.
For $N\in\N$, by $\D$ we denote the set of divisors of $N$.
For 
$X\in\ZD$, we define the eta quotient $\e^X$
   by
   \begin{equation}\label{3Jan15.1}
    \e^X:=\displaystyle{\prod_{\substack{\ \\\ \\d\in\D}}\eta_d^{X_d}},
 \end{equation}
where $X_d$ is the 
value of $X$ 
at $d\in\D$ whereas $\e_d$ denotes the rescaling of $\e$ by $d$.
Clearly, the level of $\e^X$ divides $N$. In other words, $\e^X$ transforms like a modular form on $\GN$. 
We define the summatory function $\sigma:\ZD\rightarrow\Z$ by \begin{equation}
\sigma(X):=\sum_{\substack{\ \\\ \\d\in\D}}X_d.
\label{30.08.2015.A}\end{equation}
Since $\e$ is of weight $1/2$,  
the weight of $\e^X$ is  $\sigma(X)/2$ for all $X\in\ZD$.

Recall that an eta quotient $f$ on $\GN$ is holomorphic
if it 
does not have any poles at the cusps of $\GN$. Under the action of $\GN$ on $\PQ$
by M\"obius transformation, for 
$a,b\in\Z$ with $\gcd(a,b)=1$,
we have
\begin{equation}
[a:b]\hspace{.1cm}
{{{{\sim}}}}_{\hspace*{-.05cm}{{{{\GN}}}}}
\hspace{.08cm}[a':\gcd(N,b)]\label{19.05.2015} 
\end{equation}
for some $a'\in\Z$ which is coprime to $\gcd(N,b)$ (see \cite{ds}).
We identify $\PQ$ with $\Q\cup\{\infty\}$ via the canonical bijection that maps $[\alpha:\lambda]$ to 
$\alpha/\lambda$ if $\lambda\neq0$ and to $\infty$ if $\lambda=0$. 
For $s\in\Q\cup\{\infty\}$ and a weakly holomorphic modular form $f$ on $\GN$, the order of $f$ at the cusp $s$ of $\GN$ is 
the exponent of 
 \hyperlink{queue}{$q^{{1}/{w_s}}$} 
occurring with the first nonzero coefficient in the 
$q$-expansion of $f$
at the cusp $s$,
where $w_s$ is the width of the cusp $s$ (see \cite{ds}, \cite{ra}).
The following is a minimal set of representatives of the cusps of $\GN$ (see \cite{ds}, \cite{ymart}):
\begin{equation}
\cl{S}_N:=\Big{\{}\frac{a}{t}\in\Q\hspace{2.5pt}{\di}\hspace{2.5pt}t\in\cl{D}_N,\hspace{2pt} a\in\bb{Z}, 
 \hspace{2pt}\gcd(a,t)=1\Big{\}}/\sim\hspace{1.5pt},
\end{equation}
where $\dfrac{a}{t}\sim\dfrac{b}{t}$ if and only if $a\equiv b\pmod{\gcd(t,N/t)}$.
For $d\in\D$ and for $s=\dfrac{a}t\in\cl{S}_N$ with $\gcd(a,t)=1$, we have
\begin{equation}
 \ord_s(\e_d\hspace{1pt};\GN)= \frac{N\cdot\gcd(d,t)^2}{24\cdot d\cdot\gcd(t^2,N)}\in\frac1{24}\N 
\label{26.04.2015}\end{equation}
(see 
\cite{ymart}). 
It is easy to check the above inclusion  
when $N$ is a prime power. 
The general case 
follows by multiplicativity (see (\ref{27.04.2015}) and (\ref{13May})).
It follows that for all $X\in\ZD$, we have
\begin{equation}
  \ord_s(\e^X\hspace{1pt};\GN)= \frac1{24}\sum_{\substack{\ \\\ \\d\in\D}}\frac{N\cdot\gcd(d,t)^2}{d\cdot\gcd(t^2,N)}X_d\hspace{1.5pt}. 
\label{27.04}\end{equation}
 In particular, 
that implies
\begin{equation}
 \ord_{a/t}(\e^X\hspace{1pt};\GN)=\ord_{1/t}(\e^X\hspace{1pt};\GN)
\label{27.04.2015.1}\end{equation}
for all $t\in\D$ and for all the $\varphi(\gcd(t,N/t))$ inequivalent cusps of $\GN$
represented by rational numbers
of the form $\dfrac{a}{t}\in\cl{S}_N$ with $\gcd(a,t)=1$.
The index of $\GN$ in $\operatorname{SL}_2(\Z)$ is given by
\begin{equation}
\psi(N):= 
N\cdot\hspace*{-.2cm}\prod_{\substack{p|N\\\text{$p$ prime}}}\hspace*{-.2cm}\left(1+\frac{1}{p}\right)
\label{27.04.2015.A}\end{equation}
(see \cite{ds}).
The \emph{valence formula} for $\GN$ (see \cite{ra}) states:
\begin{equation}
\sum_{\substack{\ \\\ \\ P\in\hspace*{1pt}\GN\backslash\H}}\frac1{n_P}\cdot\ord_P(f)
\hspace{2pt}+\sum_{\substack{\ \\\ \\ s\in\hspace{.5pt}\cl{S}_N}}\ord_{s}(f\hspace{1pt};\GN) 
=\frac{k\cdot\psi(N)}{24}\hs,\label{3.2Aug}
\end{equation}
where $k\in\Z$, $f:\H\rightarrow\C$ is a meromorphic function that transforms like a modular forms of weight
$k/2$ on $\GN$ which is also meromorphic at the cusps of $\GN$ and $n_P$ is the number of elements in the 
stabilizer of $P$ in the group $\GN/\{\pm I\}$, where $I\in\operatorname{SL}_2(\Z)$ denotes the identity matrix.
In particular, if $f$ is an eta quotient, then
from (\ref{3.2Aug}) we obtain
\begin{equation}
\sum_{\substack{\ \\\ \\ s\in\hspace{.5pt}\cl{S}_N}}\ord_{s}(f\hspace{1pt};\GN) 
=\frac{k\cdot\psi(N)}{24}\hs,
\label{27.04.2015.2}\end{equation}
because  eta quotients do not have poles or zeros on $\H$. 
it follows from  (\ref{27.04.2015.2})
and from 
 (\ref{27.04.2015.1}) that for an eta quotient $f$ of weight $k/2$ on $\GN$, the valence formula
 further reduces to
\begin{equation}
\sum_{{\substack{\ \\\ \\ t\hspace{.5pt}\idi N}}} 
\varphi(\gcd(t,N/t))\cdot\ord_{1/t}(f\hspace{1pt};\GN) 
=\frac{k\cdot\psi(N)}{24}\hspace{1pt}.
\label{27.04.2015.3}
\end{equation}
Since $\ord_{1/t}(f\hspace{1pt};\GN)\in\frac{1}{24}\Z$ (see (\ref{26.04.2015})), from
(\ref{27.04.2015.3}) it follows that of any particular weight, there are only finitely many holomorphic eta quotients
on $\GN$. More precisely, the number of holomorphic eta quotients of weight $k/2$
on $\GN$ is at most the number of solutions of the following equation
\begin{equation}
 \sum_{\substack{\ \\\ \\t\in\D}}{\varphi(\gcd(t,N/t))}\cdot x_t=k\cdot\psi(N)
\label{27.04.2015.3Z}\end{equation}
in nonnegative integers $x_t$. 

We define the \emph{\hypertarget{om}{order map}}
$\cl{O}_N:\ZD\rightarrow\frac1{24}\ZD$ of level $N$ as the map which sends $X\in\ZD$ to the ordered set of
orders of the eta quotient $\e^X$ at the cusps $\{1/t\}_{\substack{\ \\ t\in\D}}$ of $\GN$. Also, we define the
\emph
{order matrix} $A_N\in\Z^{\D\times\D}$ of level $N$ by 
\begin{equation}
 A_N(t,d):=24\cdot\ord_{1/t}(\e_d\hspace{1pt};\GN) 
\label{27.04.2015}\end{equation}
for all $t,d\in\D$.
 For example, for a prime power $p^n$, 
we have
\begin{equation}
A_{p^n}=\begin{pmatrix}
 \vspace{5.8pt}p^n &p^{n-1} &p^{n-2} 
&\cdots  &p &1\\
\vspace{5.8pt} p^{n-2} &p^{n-1} &p^{n-2} 
&\cdots  &p &1\\
 \vspace{5.8pt}p^{n-4} &p^{n-3} &p^{n-2} 
&\cdots  &p &1\\
\vspace{5.8pt} \vdots &\vdots &\vdots 
&\cdots &\vdots &\vdots\\
\vspace{5.8pt} 1 &p &p^2 
&\cdots &p^{n-1} &p^{n-2}\\
 1 &p &p^2 
&\cdots  &p^{n-1} &p^n
\end{pmatrix}.
 \label{23July}
\end{equation}
By linearity of the order map, we have 
\begin{equation}
\cl{O}_N(X)=\frac1{24}\cdot A_NX\hspace{1.5pt}. 
\label{28.04}\end{equation}
For $r\in\N$, if $Y,Y'\in\Z^{\D^{\hspace{.5pt}r}}$ is 
such that $Y-Y'$ is
nonnegative at each element of $\D^{\hspace{.5pt}r}$, then
we write $Y\geq Y'$. 
In particular, for $X\in\ZD$, the eta quotient $\e^X$ is holomorphic if and only if $A_NX\geq0$.

From $(\ref{27.04.2015})$ and $(\ref{26.04.2015})$, we note that $A_N(t,d)$ is multiplicative in $N, t$ and $d$.
Hence, it follows that
\begin{equation}
 A_N=\bigotimes_{\substack{\ \\\ \\ p^n\|N\\\text{$p$ prime}}}A_{p^n},
\label{13May}\end{equation}
where by \hspace{1.5pt}$\otimes$\hspace{.2pt},  we denote the Kronecker product of matrices.\footnote{Kronecker product of matrices is
not commutative. However, 
since any given ordering of the primes dividing $N$ induces a lexicographic ordering on $\cl{D}_N$ 
with which the entries of $A_N$ are indexed, 
Equation (\ref{13May}) makes sense for all possible 
orderings of the primes dividing $N$.} 

It is easy to verify that for a prime power $p^n$, 
the matrix $A_{p^n}$ is invertible with the tridiagonal inverse: 
\begin{equation}
A_{p^n}^{-1}=
\frac{1}{p^{n}\cdot(p-\frac1p)}
\begin{pmatrix}
\hspace{6pt}p &\hspace*{-6pt}-p &  
& &  & \\
\vspace{5pt}\hspace*{-1pt}-1 & \hspace*{-2pt}p^2+1 &\hspace*{-3pt}-p^2 
& &\textnormal{\Huge 0} & \\
\vspace{5pt}&\hspace*{-6pt}-p & \hspace*{-4pt}p\cdot(p^2+1) &\hspace*{-2pt}-p^3 
 &  &  \\
 &  &\hspace*{-5pt}\ddots 
 &\hspace*{-6pt}\ddots & \ddots & \\
\vspace{5pt}\hspace{2pt} &\textnormal{\Huge 0}  &  
 &\hspace*{-7pt}-p^2 & \hspace*{-4pt}p^2+1 &\hspace*{-4pt}-1\hspace{2pt}\\
\hspace{2pt} &  & 
&  &\hspace*{-6pt}-p & p\hspace{2pt}
\end{pmatrix},
 \label{r1}\end{equation}
where 
for each positive integer $j<n$, 
the nonzero entries of the column  $A_{p^n}^{-1}(\un\hs,p^j)$ are the same as 
those of the column $A_{p^n}^{-1}(\un\hs,p)$ shifted down by $j-1$ entries and
multiplied with $p^{\min\{j-1,n-j-1\}}$. 
More precisely,
\begin{align}
p^{n}\cdot(p-\frac1p)&
\cdot A_{p^n}^{-1}(p^i,p^j)\nonumber=\\&\begin{cases}
                        \ \hs\ p&\text{if $i=j=0$ or $i=j=n$}\\
                        -p^{\min\{j,n-j\}}&\text{if $|i-j|=1$}\\
                        \ \hs\ p^{\min\{j-1,n-j-1\}}\cdot(p^2+1)&\text{if $0<i=j<n$}\\
                        \ \hspace{1.2pt}\ 0&\text{otherwise.}\label{r11}
\end{cases}
\end{align}
For general $N$, the invertibility of the matrix $A_N$ now 
follows by (\ref{13May}).
Hence, any eta quotient on $\GN$ is uniquely determined by its orders at the set of the cusps
$\{1/t\}_{\substack{\ \\ t\in\D}}$ of $\GN$. In particular, for distinct $X,X'\in\ZD$, we have $\e^X\neq\e^{X'}$. 
The last statement 
is also implied by the uniqueness of $q$-series expansion:
Let $\e^{\widehat{X}}$ and $\e^{\widehat{X}'}$
be the \emph{eta products} (i.~e.  $\widehat{X}, \widehat{X}'\geq0$)
obtained by multiplying $\e^X$ and $\e^{X'}$ with a common denominator. The claim follows by induction on the weight of $\e^{\widehat{X}}$
(or equivalently, the weight of $\e^{\widehat{X}'}$)
when we compare
the corresponding 
first two exponents of $q$
occurring in the $q$-series expansions of 
$\e^{\widehat{X}}$ and $\e^{\widehat{X}'}$.

\section{
Proof of Theorem~$\ref{prp1}$}
\label{17.6Sept}

We shall only prove the theorem for the case where $n$ is even. 
The proof for the case where $n$ is odd is quite similar. 
Let $A_N$ be the order matrix of level $N$ (see (\ref{27.04.2015})).
Since all the entries of $A_N^{-1}$
are rational (see (\ref{13May}) and (\ref{r1})), for each $t\in\D$, there exists a smallest positive integer $m_{{t,N}}$ such that 
$m_{{t,N}}\cdot A_N^{-1}(\un\hs,t)$
has integer entries, where $A_N^{-1}(\un\hs,t)$ denotes the 
column of $A_N$ indexed by $t\in\D$.
We define $B_N\in\Z^{\D\times\D}$ by
\begin{equation}
B_N(\un\hs,t):=m_{{t,N}}\cdot A_N^{-1}(\un\hs,t)\hspace{4pt}\text{for all $t\in\D$.}
\label{28.08.2015}\end{equation}
From the multiplicativity of $A_N^{-1}(d,t)$ in $N$, $d$ and $t$  (see (\ref{13May})),
it follows that $B_N(d,t)$ (see (\ref{28.08.2015})) is also multiplicative in $N$, $d$ and $t$. 
That implies:
\begin{equation}
 B_N=\bigotimes_{\substack{\ \\\ \\ \hspace*{3pt}p\in{{\wp}}_{{N}}\\p^n\|N
 }}
 B_{p^n},
\label{28.08.2015.1}\end{equation}
where ${{{\wp}}_{{N}}}$ denotes the set of prime divisors of $N$.
For a prime $p$, 
from (\ref{28.08.2015}) and 
(\ref{r1}), we have 
 \begin{equation}
 B_{p^n}=
\begin{pmatrix}
\hspace{6pt}p &\hspace*{-6pt}-p &  
& &  & \\
\vspace{5pt}\hspace*{-1pt}-1 & \hspace*{-2pt}p^2+1 &\hspace*{-3pt}-p 
& &\textnormal{\Huge 0} & \\
\vspace{5pt}&\hspace*{-6pt}-p & \hspace*{-4pt}p^2+1 &\hspace*{-2pt}-p 
 &  &  \\
 &  &\hspace*{-5pt}\ddots 
 &\hspace*{-6pt}\ddots & \ddots & \\
\vspace{5pt}\hspace{2pt} &\textnormal{\Huge 0}  &  
 &\hspace*{-7pt}-p & \hspace*{-4pt}p^2+1 &\hspace*{-4pt}-1\hspace{2pt}\\
\hspace{2pt} &  & 
&  &\hspace*{-6pt}-p & p\hspace{2pt}
\end{pmatrix}.
\label{equ1}
\end{equation}
From \cite{B-four}, we recall that if $\eta^X$ is an irreducible holomorphic eta quotient on $\Gamma_0(N)$,
then $X\in\cl{Z}_N\cap\bb{Z}^{\D}$, where 
\begin{equation}
\cl{Z}_N=\bigg{\{}\sum_{d|N}C_dv_d\hspace{2pt}\bigg{|}\hspace{2pt}C_d\in[0,1]\text{ for all } d|N\bigg{\}} 
\end{equation}
and for all $d\in \D$, $v_d=m_du_d$, where $u_d$ is the column of $B_N$ indexed by $d$ and $m_d$ is the smallest
positive integer such that $v_d\in\bb{Z}^{\D}$.  Let $v:=\sum_{d|N}v_d$ and let $F_N=\eta^v$.
Again, from \cite{B-four}, we recall 
that 
given a holomorphic eta quotient $g$ on $\Gamma_0(N)$,  the eta quotient 
$F_N/g$ \hspace{2pt}on $\Gamma_0(N)$ is holomorphic if and only if $g$ corresponds to some point in  $\cl{Z}_N\cap\bb{Z}^{\D}$.

In particular, for $N=p^{2m}$, the eta quotient $F_{p^{2m}}=\eta^v$ is given by 
\begin{equation}F_{p^{2m}}(z)=\left\{\begin{array}{ll}
      \eta(pz)^{p^2-1}&\text{if $m=1$,}\\
\\
(\eta(pz)\hspace{1pt}\eta(p^{2m-1}z))^{p(p-1)}\hspace{1pt}\prod_{r=2}^{2m-2}\eta(p^{r}z)^{(p-1)^2}
                      &\text{if $m>1$.}\end{array}\right.\end{equation}
Let $f_{p,2m}$ be the eta quotient defined in (\ref{fpn}). 
Then we have
\begin{equation}\frac{F_{p^{2m}}(z)}{f_{p,2m}(z)}=\left\{\begin{array}{ll}
    \eta(z)^{p-1}\hspace{1pt}\eta(pz)^{p-2}\hspace{1pt}\eta(p^2z)^{p-1}&\text{if $m=1$,}\\
\\
\eta(z)^{p-1}\hspace{1pt}\eta(pz)^{p-2}\hspace{1pt}\eta(p^{2m}z)^{p-1}\hspace{1pt}\prod_{r=1}^{m-1}\frac{\eta(p^{2r-1}z)\hspace{1pt}\eta(p^{2r+1}z)^{p-1}}{\eta(p^{2r}z)}
                      &\text{if $m>1$.}\end{array}\right.\label{k3}\end{equation}
Since $\frac{\eta(z)\eta(p^2z)^{p-1}}{\eta(pz)}$ is 
a holomorphic eta quotient of level $p^2$, 
it follows that 
$\frac{F_{p^{2m}}(z)}{f_{p,2m}(z)}$ is a holomorphic eta quotient of level $p^{2m}$ for all $m\in\bb{N}$.
Let $X\in\bb{Z}^{\D}$ be such that $f_{p,2m}=\eta^X$. From  
(\ref{k3}), we conclude that $X\in\cl{Z}_N$.
In other words, $Y:=\widetilde{A}_NX$ has all its entries in the interval $[0,1]$. 
From (\ref{fpn}), it easily follows that 
$\ord_\infty(f_{2m,p})=1/24$. 
Since $f_{2m,p}$ is invariant under the Fricke involution on $\Gm_0(p^{2m})$, 
we also have $\ord_0(f_{2m,p})=1/24$, since the Fricke involution interchanges the cusps
$0$ and $\infty$ of $\Gm_0(p^{2m})$. Since $0$ and $1$ (resp. $\infty$ and $1/{p^{2m}}$)
represent the same cusp of $\Gm_0(p^{2m})$, from 
(\ref{r1}) we get that
both the first and the last entries of $Y$ are equal to 
$\frac1{p^{2m-1}(p^2-1)}$.


There exists $U_N, V_N\in GL_{\sigma_0(N)}(\bb{Z})$ and  a diagonal matrix $D_N$ such that $D_N=U_N\times B_N\times V_N$. 
We shall see in the next section that if $N=p^n$ for some prime $p$ and some integer $n>2$, then \begin{equation}D_N=\operatorname{diag}(1,1,\hdots,1,p^{n-1},p^{n-1}(p^2-1))\label{k5}\end{equation}                                                                                                                                                                
and the last two columns of $V_N$ are respectively 
\begin{equation}\cl{C}_{n,1}:=\left\{\begin{array}{ll}
(1,0)^t&\text{if $n=1$,}\\
\\
(-1,0,1)^t&\text{if $n=2$,}\\
\\
(1,1,p,p^2,\hdots,p^{n-3},p^{n-2},0)^t&\text{if $n>2$}
                             \end{array}\right.
\end{equation}
and
\begin{equation}
\cl{C}_{n,2}:=\left\{\begin{array}{ll}
(p,1)^t&\text{\quad if $n=1$,}\\
\\
(p^2,1,1)^t&\text{\quad if $n=2$,}\\   
\\
(p^n,p^{n-2},p^{n-3}\hdots,p,1,1)^t&\text{\quad if $n>2$.}      
             \end{array}\right.
\end{equation}  
Next we 
briefly recall 
an useful tool from Linear Algebra:

\begin{minipage}{14cm}
 \ \quad By elementary row and column operations \cite{a}, one can reduce any matrix $B\in\operatorname{GL}_n(\bb{Z})$ to a diagonal matrix $D$. In other words,
there exists $U, V\in\operatorname{GL}_n(\bb{Z})$ and $D=\mathrm{diag}(d_1,d_2,\hdots,d_n)\in\operatorname{GL}_n(\bb{Z})$ such that $D=U\cdot B\cdot V$. 
Since $U,V\in \operatorname{GL}_n(\bb{Z})$,
we have $U^{-1}\cdot\bb{Z}^n=\bb{Z}^n$ and $V\cdot\bb{Z}^n=\bb{Z}^n$. Therefore,
$$\bb{Z}^n/(B\cdot \bb{Z}^n)=U^{-1}\cdot\bb{Z}^n/(B\cdot V\cdot\bb{Z}^n)\simeq\bb{Z}^n/(U\cdot B\cdot V\cdot\bb{Z}^n)=\bb{Z}^n/(D\cdot \bb{Z}^n)=\bigoplus_{i=1}^n\bb{Z}^n/d_i\bb{Z}^n.$$
The above isomorphism maps the element $\underline{\ell}:=(\ell_1\hdots\ell_n)^t$ \hspace{1pt}of\hspace{2pt} $\bigoplus_{i=1}^n\bb{Z}^n/d_i\bb{Z}^n$ to the element $$U^{-1}\cdot\underline{\ell}\pmod{B\cdot\bb{Z}^n}$$ of $\bb{Z}^n/(B\cdot \bb{Z}^n)$.
 Since $B$ is invertible, there is a bijection between $\bb{Z}^n/(B\cdot \bb{Z}^n)$ and \hspace{2pt}$[0,1)^n\cap B^{-1}\cdot\bb{Z}^n$, given by $$X\pmod{B\cdot\bb{Z}^n}\hspace{.1cm}\mapsto\hspace{.1cm} B^{-1}\cdot X\pmod{\bb{Z}^n}.$$
Composing this bijection with the isomorphism above, we get a bijection between \hspace{2pt}$\bigoplus_{i=1}^n\bb{Z}^n/d_i\bb{Z}^n$ and \hspace{2pt}$[0,1)^n\cap B^{-1}\cdot\bb{Z}^n$, given by 
$$\underline{\ell}\hspace{.1cm}\mapsto\hspace{.1cm} B^{-1}\cdot U^{-1}\cdot\underline{\ell}\pmod{\bb{Z}^n}\hspace{.1cm}=\hspace{.1cm}V\cdot D^{-1}\cdot\underline{\ell}\pmod{\bb{Z}^n}.$$
Now multiplication by $B$ maps \hspace{2pt}$[0,1)^n\cap B^{-1}\cdot\bb{Z}^n$ bijectively to $B\cdot[0,1)^n\cap \bb{Z}^n$.
\end{minipage}

\

\

Let $N=p^{2m}$ and suppose,  
$\eta^X=f_{2m,p}$ is reducible. Let $Y=A_NX$. 
Since $\eta^X$ is reducible
there exists $Y',Y''\in\bb{Z}^{\D}\smallsetminus\{0\}$ with $Y'\geq0$ and $Y''\geq0$ such that $Y=Y'+Y''$ and both 
$B_NY'$ and $B_NY''$ have integer entries. Since $B_N$ is an integer matrix
with determinant $d_N:={p^{2m-1}(p^2-1)}$,  we see that $\frac1{d_N}$ is the least possible entry for 
$Y'$ and $Y''$. Since $Y'+Y''=Y$ has $\frac1{d_N}$ as its first entry, either the first entry of $Y'$ or that of
$Y''$ is zero. Similarly, either the last entry of $Y'$ or that of $Y''$ is zero. But it is easy to show\footnote{From the  
congruence relation (\ref{k10})  \hspace{4pt}(resp. replacing $Y'$ with $Y''$ in (\ref{k10})).}
that if both the first and the last entries of $Y'$  (resp. $Y''$) is zero, then $Y'$ (resp. $Y''$) is entirely zero.
So, without loss of generality, we may assume that the first entry of $Y'$ is $\frac1{d_N}$ and the last entry of $Y'$ is 0.
From the previous section and from the entries of the diagonal matrix $D_N$, we know that there exists $\ell_1\in\{0,1,\hdots,p^{2m-1}-1\}$ and 
$\ell_2\in\{0,1,\hdots,p^{2m-1}(p^2-1)-1\}$ such that \begin{equation}
\frac{\ell_1}{p^{2m-1}}\cdot{\cl{C}_{2m,1}}+\frac{\ell_2}{p^{2m-1}(p^2-1)}\cdot{\cl{C}_{2m, 2}}\equiv Y'\pmod{\bb{Z}^n}.\label{k10}\end{equation}

\ \newline\textbf{Case 1.} $(m=1)$


Equating only the first and the last entries from both sides of 
 (\ref{k10}), we obtain
$$\frac{\ell_1}{p}+\frac{p\ell_2}{p^2-1}\equiv\frac{1}{d_N}\pmod{\bb{Z}} \ \ \ \text{ and } \  \ \ \frac{\ell_1}{p}+\frac{\ell_2}{p(p^2-1)}\equiv0\pmod{\bb{Z}},$$
which together implies that $$\frac{\ell_1}{p}\equiv\frac{1}{d_N}\pmod{\bb{Z}}.$$
But this modular equation has no solution in $\ell_1\in\{0,1,\hdots,p-1\}$. Thus we get a contradiction!

\ \newline\textbf{Case 2.} $(m>1)$

Since the last entries of $Y'$ and $\cl{C}_{2m,1}$ are 0, whereas the last entry of $\cl{C}_{2m,2}$ is 1, it follows that $\ell_2=0$. Since the first entry 
of $\cl{C}_{2m, 1}$ is 1, we get $$\frac{\ell_1}{p^{2m-1}}\equiv\frac1{d_N}\pmod{\bb{Z}}$$
as in the previous case. 
Since as before, this has no solution in $\ell_1\in\{0,1,\hdots,p^{2m-1}-1\}$, we get a contradiction.

\vspace{2pt}
 Hence,
$f_{2m,p}=\eta^X$ is irreducible.\qed

\section{The matrix identities}
 \ \quad We continue to prove that the matrix identities $B=UDV$, $UU'=1$ and $VV'=1$ with $B=B_{p^n}$ as defined in (\ref{equ1}) and $D=D_{p^n}$ as defined in (\ref{k5})
 holds if we define $U=U_{p^n}$, $V=V_{p^n}$, $U'=U'_{p^n}$ and $V'=V'_{p^n}$ as follows for $n=1,2,3$ or $n\geq4$:
%
%
%

\vspace*{5pt}
For $n=1$, we define 
$$
U:=\left(\begin{array}{rr}
0 & -1 \\
1 & p
\end{array}\right),\text{  }
V:=\left(\begin{array}{rr}
1 & p \\
0 & 1
\end{array}\right),\text{  }
U':=\left(\begin{array}{rr}
p & 1 \\
-1 & 0
\end{array}\right)\text{ \hspace{2pt}and\hspace{2pt} }
V':=\left(\begin{array}{rr}
1 & -p \\
0 & 1
\end{array}\right).$$

For $n=2$, we define 

$$U:=\begin{pmatrix}
0& 1& 0\\
0& p& 1\\
1& p& 1
  \end{pmatrix},\text{  }
V:=\begin{pmatrix}
0& -1& p^2\\
0& 0& 1\\
-1& 1& 1
  \end{pmatrix},\text{  }
U'=\begin{pmatrix}
0& -1& 1\\
1& 0& 0\\
-p& 1& 0
  \end{pmatrix}\text{ \hspace{2pt}and\hspace{2pt} }$$
$$V'= \begin{pmatrix}
-1& p^2 + 1& -1\\
-1& p^2& 0\\
0& 1& 0
  \end{pmatrix}.
$$

\vspace*{5pt}
For $n=3$, we define

\begin{alignat*}{3}U&:=\begin{pmatrix}
0&-1&-p&-p^2\\
0&0&-1&-p\\
0&0&-p&-(p^2+1)\\
1 &p &p^2 &p^3
  \end{pmatrix},\text{  }
&V&:=\begin{pmatrix}
1&0&1&p^3\\
0&0&1&p\\
0&-1&p&1\\
0&0&0&1
  \end{pmatrix},\\
\\
U'&:=\begin{pmatrix}
p&0&0&1\\
-1&p&0&0\\
0&-(p^2+1)&p&0\\
0&p&-1&0
  \end{pmatrix}\text{ \hspace{2pt}and\hspace{2pt} }
&V'&:= \begin{pmatrix}
1 &-1 &0 &-p(p^2-1)\\
0 &p &-1 &-(p^2-1)\\
0&1&0&-p\\
0&0&0&1
\end{pmatrix}.
\end{alignat*}

For $n>3$:

\vspace{2pt}\quad We define $U=(U_{i,j})_{\hs0\leq i,j\leq n}$ 
by
\begin{equation}\begin{tabular}{ l|c|c| }
\multicolumn{1}{r}{}
 &  \multicolumn{1}{c}{$j=0$}
 & \multicolumn{1}{c}{$j>0$}\\
\cline{2-3}
\vspace*{-.45cm}
&&\\
$i<n-1$ & 0 & $\left\{\begin{array}{cl}
-p^{j-i-1}&\text{if  $j>i$,}\\  
0&\text{otherwise.}    
      \end{array}\right.$\\
\cline{2-3}
\vspace*{-.45cm}
&&\\
$i=n-1$ & 0 & $-p^{n-j}\cdot\frac{p^{2(j-1)}-1}{p^2-1}$ \\
\cline{2-3}
$i=n$ &1 & $p^j$\\
\cline{2-3}
\end{tabular}\hspace{4pt}. 
\label{u}\end{equation}

\vspace{.5cm}\quad We define $V=(V_{i,j})_{\hs0\leq i,j\leq n}$ 
by 
\begin{equation}\begin{tabular}{ l|c|c|c|c| }
\multicolumn{1}{r}{}
 & \multicolumn{1}{c}{$j=0$}
 &  \multicolumn{1}{c}{$0<j<n-1$}
 & \multicolumn{1}{c}{$j=n-1$} 
 & \multicolumn{1}{c}{$j=n$}\\
\cline{2-5}
$i=0$&1 &0&1&$p^n$\\
\cline{2-5}
\vspace*{-.45cm}
&&&&\\
$0<i<n$ & 0& $\left\{\begin{array}{cl}
-p^{i-j-1}&\text{if  $i>j$,}\\  
0&\text{otherwise.}    
      \end{array}\right.$&$p^{i-1}$&$p^{n-i-1}$\\
\cline{2-5}
$i=n$ &0 & $0$&0&1\\
\cline{2-5}
\end{tabular}\hspace{4pt}. 
\label{v}\end{equation}

\quad We define $U'=(U'_{i,j})_{\hs0\leq i,j\leq n}$ by 
\begin{equation}\begin{tabular}{ l|c|c|c|c|c| }
\multicolumn{1}{r}{}
 & \multicolumn{1}{c}{$j=0$}
 &  \multicolumn{1}{c}{$0<j<n-2$}
 & \multicolumn{1}{c}{$j=n-2$} 
 & \multicolumn{1}{c}{$j=n-1$}
 & \multicolumn{1}{c}{$j=n$}
\\
\cline{2-6}
$i=0$&$p$ &0&0&0&1\\
\cline{2-6}
$0<i<n-1$ & $\begin{matrix}-1\hspace{8pt}\\0\\ \vdots\\0\end{matrix}$& $\left\{\begin{array}{cl}
p&\text{if  $i=j$,}\\
-1\hspace{8pt}&\text{if  $i=j+1$,}\\
0&\text{otherwise.}    
      \end{array}\right.$&$\begin{matrix}0\\ \vdots\\0\\p\end{matrix}$&0&0\\
\cline{2-6}
$i=n-1$ & 0 & $-p^{n-j}$&$-(p^2+1)$&$p$&0\\
\cline{2-6}
$i=n$ &0 &$p^{n-j-1}$&$p$&$-1\hspace{8pt}$&0\\
\cline{2-6}
\end{tabular}\hspace{2.5pt}. 
\nonumber \end{equation}
\begin{equation}\label{u'}\end{equation}
\quad We define $V'=(V'_{i,j})_{\hs0\leq i,j\leq n}$
by 
\begin{equation}\begin{tabular}{ l|c|c|c| }
\multicolumn{1}{r}{}
 & \multicolumn{1}{c}{$j=0$}
 &  \multicolumn{1}{c}{$0<j<n$}
 & \multicolumn{1}{c}{$j=n$}\\
\cline{2-4}
$i=0$&1 &$\begin{matrix}-1\quad&0\quad&\cdots\quad&0\quad\end{matrix}$&$-p^{n-2}(p^2-1)$\\
\cline{2-4}
$0<i<n-1$ & 0& $\left\{\begin{array}{cl}
p&\text{if  $i=j$,}\\
-1\hspace{8pt}&\text{if  $i=j-1$,}\\
0&\text{otherwise.}    
      \end{array}\right.$&$-p^{n-i-2}(p^2-1)$\\
\cline{2-4}
$i=n-1$ & 0 & $\begin{matrix}1\quad&0\quad&\cdots\quad&0\quad\end{matrix}$&$-p^{n-2}$\\
\cline{2-4}
$i=n$ &0 & $0$&1\\
\cline{2-4}
\end{tabular}\hspace{4pt}. 
\label{v'}\end{equation}

\

\begin{pr} Given $n\in\bb{N}$, let $U,V,U'$ and $V'$ be the matrices as defined above. For
$N=p^n$, we set the matrices $B=B_N$ and $D=D_N$ as in equations equations (\ref{equ1}) and (\ref{k5}).
Then we have 
$$UU'=I,\hspace{3pt} VV'=I,\hspace{3pt}and\hspace{3pt}D=UBV.$$
\end{pr}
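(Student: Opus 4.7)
The plan is to verify the three asserted matrix identities by a direct but structured computation, treating the four explicit definitions of $U,V,U',V'$ (for $n=1,2,3$ and $n\geq 4$) in turn. For $n=1,2,3$, the matrices are of size at most $4\times 4$ with all entries explicit, so I would simply carry out the three matrix multiplications $UU'$, $VV'$, and $UBV$ and check that they equal $I$, $I$, and the claimed diagonal matrix. In these small cases everything reduces to arithmetic.

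For $n\geq 4$ the entries are given by the piecewise formulas in (\ref{u})--(\ref{v'}), and my plan is to reduce each identity to a case-by-case check indexed by which ``block'' of the piecewise definitions the row and column indices fall into. The natural blocks are $\{0\}$, $\{1,\ldots,n-2\}$, $\{n-1\}$, $\{n\}$, with occasional further separation of the inner block along the ``staircase'' conditions $i=j$, $i=j\pm 1$. To verify $UU'=I$ I would expand $(UU')_{i,k}=\sum_j U_{i,j}U'_{j,k}$, restrict $j$ to the at most three values for which both factors are nonzero (visible directly from the tables), and check that the surviving terms either telescope to $1$ on the diagonal or cancel off it. The identity $VV'=I$ is handled analogously.

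For the central identity $D=UBV$, the efficient order is to first compute $BV$ column by column, taking advantage of the tridiagonality of $B$: each column of $BV$ is a three-term linear combination $B_{i,i-1}V_{i-1,k}+B_{i,i}V_{i,k}+B_{i,i+1}V_{i+1,k}$ of consecutive entries of a column of $V$, with obvious boundary corrections at $i=0,n$. Because the columns of $V$ are either unit vectors, simple staircases, or the explicit vectors $\cl{C}_{n,1}$, $\cl{C}_{n,2}$, each column of $BV$ admits a closed-form description. A second application of $U$, using the same block decomposition, then produces a matrix whose only nonzero entries lie on the diagonal and match the prescribed values $1,\ldots,1,p^{n-1},p^{n-1}(p^2-1)$ of $D_{p^n}$.

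I expect the main obstacle to be purely combinatorial: correctly handling the boundary indices in $\{0,1,n-2,n-1,n\}$, where the piecewise definitions in (\ref{u})--(\ref{v'}) switch form, and making sure that the geometric-series expression $(p^{2(j-1)}-1)/(p^2-1)$ appearing in the $i=n-1$ row of $U$ combines correctly with the staircase pattern of $V$ in the last two columns. Once these index blocks are enumerated once and for all, each identity collapses to an elementary polynomial identity in $p$.
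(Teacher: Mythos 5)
Your proposal is correct and takes essentially the same approach as the paper: the small cases $n\leq 3$ by direct arithmetic, and for $n\geq 4$ an entrywise verification organized by the boundary blocks of the piecewise definitions, with the surviving terms summing as geometric series. The only organizational difference is that the paper checks the central identity in the equivalent form $DV'=UB$ (using $VV'=I$), so that only one product with the tridiagonal $B$ is needed, whereas you compute $U(BV)$ directly; this is a minor efficiency shortcut, not a different method.
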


\begin{proof}
If $n\leq 3$, these identities hold trivially. If $n>3$, the proofs of the 
equalities of the corresponding matrix entries 
in each of these matrix relations
involve (at most) summation of some geometric series. For example,
consider the identity $D=UBV$.
 It is equivalent to $DV'=UB$, assuming  $VV'=I$. Now  from (\ref{k5}) and (\ref{v'}),  we see that for $i,j\in\{0,\hdots,n\}$, the $(i,j)$-th entry of $DV'$ is given by
\begin{equation}\begin{tabular}{ l|c|c|c| }
\multicolumn{1}{r}{}
 & \multicolumn{1}{c}{$j=0$}
 &  \multicolumn{1}{c}{$0<j<n$}
 & \multicolumn{1}{c}{$j=n$}\\
\cline{2-4}
$i=0$&1 &$\begin{matrix}-1\quad&0\quad&\hdots\quad&0\quad\end{matrix}$&$-p^{n-2}(p^2-1)$\\
\cline{2-4}
$0<i<n-1$ & 0& $\left\{\begin{array}{cl}
p&\text{if  $i=j$,}\\
-1\hspace{8pt}&\text{if  $i=j-1$,}\\
0&\text{otherwise.}    
      \end{array}\right.$&$-p^{n-i-2}(p^2-1)$\\
\cline{2-4}
$i=n-1$ & 0 & $\begin{matrix}p^{n-1}\quad&0\quad&\hdots\quad&0\quad\end{matrix}$&$-p^{2n-3}$\\
\cline{2-4}
$i=n$ &0 & $0$&$p^{n-1}(p^2-1$)\\
\cline{2-4}
\end{tabular}\hspace{8pt}.\end{equation}
If we consider the case $0<i<n-1$ and $0<j<n$, then from
$(\ref{u})$ and $(\ref{equ1})$ it follows that the product of the $i$-th row of $U$
and the $j$-th column of $B$ is \begin{align*}-\sum_{k=i+1}^np^{k-i-1}\hspace{2pt}B_{k,\hspace{1pt}j}&=\sum_{k=i+1}^np^{k-i-1}(p\hspace{1pt}\delta_{|k-j|\hspace{1pt},\hspace{1pt}1}-(p^2+1)\hspace{1pt}\delta_{k,\hspace{1pt}j})\\
&=\sum_{k=\max\{i+1,\hspace{1pt}j-1\}}^{j+1}p^{k-i-1}(p\hspace{1pt}\delta_{|k-j|\hspace{1pt},\hspace{1pt}1}-(p^2+1)\hspace{1pt}\delta_{k,\hspace{1pt}j})\\
&=\left\{\begin{array}{ll}
\hspace{8pt}0&\text{ if $j<i$,}\\
\hspace{8pt}p&\text{ if $j=i$,}\\
-1&\text{ if $j=i+1$,}\\
\hspace{8pt}0&\text{ if $j>i+1$,}
\end{array}\right.
\end{align*}
where $\delta$ is the usual Kronecker delta function. So, the claim holds in this case.

Again, if we consider the case $i=n-1$ and $0<j<n$, then the product of the $i$-th row of $U$
and the $j$-th column of $B$ is \begin{align*}-\sum_{k=1}^np^{n-k}\left(p^{2(k-1)}-1\right)\hspace{2pt}B_{k,\hspace{1pt}j}&=\sum_{k=2}^np^{n-k}\left(p^{2(k-1)}-1\right)\left(p\hspace{1pt}\delta_{|k-j|\hspace{1pt},\hspace{1pt}1}-(p^2+1)\hspace{1pt}\delta_{k,\hspace{1pt}j}\right)\\
&=\sum_{k=\max\{2,\hspace{1pt}j-1\}}^{j+1}p^{n-k}\left(p^{2(k-1)}-1\right)\left(p\hspace{1pt}\delta_{|k-j|\hspace{1pt},\hspace{1pt}1}-(p^2+1)\hspace{1pt}\delta_{k,\hspace{1pt}j}\right)\\
&=\left\{\begin{array}{ll}
p^{n-1}&\text{ if $j=1$,}\\
0&\text{ otherwise,}
\end{array}\right.
\end{align*}
where the first equality holds since $p^{2(k-1)}=1$ for $k=1$. Thus, the claim also holds in this case.
The rest of the proof is quite similar and only requires some more straightforward checks as above.
\end{proof}
\bibliography{infinite-bibtex}
\bibliographystyle{IEEEtranS}
\nocite{*}
\end{document}